\theoremstyle{plain}
\theoremstyle{definition}
\newtheorem{definition}{Definition}[section]
\newtheorem{theorem}{Theorem}[section]
\newtheorem{cor}{Corollary}[section]
\theoremstyle{remark}
\newtheorem{remark}{Remark}[section]
\numberwithin{equation}{section}
\begin{document}

\title[Radon-Nikodym theorem with respect to $p$-adic $(\rho,q)$-measure on $\Bbb Z_p$]
	{Radon-Nikodym theorem with respect to $(\rho,q)$-measure on $\Bbb Z_p$}

\author[D. Lim]{Dongkyu Lim}
\address[Lim]{Department of Mathematics Education, Andong National University, Andong 36729, Republic of Korea}
\email{\href{mailto: D. Lim <dklim@anu.ac.kr>}{dklim@anu.ac.kr}}
\urladdr{\url{http://orcid.org/0000-0002-0928-8480}}


\begin{abstract}
Araci {\it et al.} introduced a $p$-adic $(\rho,q)$-analogue of the Haar distribution. By means of the distribution, they constructed the $p$-adic $(\rho,q)$-Volkenborn integral. In this paper, by virtue of the Mahler expansion of continuous functions, the author gives the Radon-Nikodym theorem with respect to the $p$-adic $(\rho,q)$-distribution on $\Bbb Z_p$.
\end{abstract}

\keywords{Radon-Nikodym theorem; $p$-adic $(\rho,q)$-measure; $(\rho,q)$-Volkenborn integral}

	\subjclass[2010]{Primary 11S80; Secondary 11S05, 11R06}


\maketitle


\section{\bf Introduction}

The main object in this paper is the $p$-adic distribution
\begin{equation*}
	\mu_{\rho,q}(a+p^N\mathbb{Z}_p)=\frac{\rho^{p^N}}{[p^N]_{\rho,q}}\biggl(\frac{q}{\rho}\biggl)^{a}
\end{equation*}
which was introduced by Araci {\it et al.} \cite{Ara}. The $p$-adic $(\rho,q)$-distribution is an analogue of Haar distribution based on $(\rho,q)$-numbers. They used it to construct the $p$-adic $(\rho,q)$-Volkenborn integral of a function $f \in UD(\Bbb Z_p)$ as follows:
\begin{equation*}
	I_{\rho,q} (f)=\int_{\Bbb
		Z_p} f(x) d \mu_{\rho,q} (x)= \lim_{N \rightarrow \infty}
	\frac{\rho^{p^N}}{[p^N]_{\rho,q}} \sum_{x=0}^{p^N -1} f(x) \biggl(\frac{q}{\rho}\biggl)^x.
\end{equation*}
The application of the $(\rho,q)$-Volkenborn is an effective way to deduce many important results in $(\rho,q)$-numbers and polynomials. 

The original motivation of this paper is the following theorem of Radon-Nikodym. 
\begin{theorem}\label{LRT}[\cite{Bauer}, p.~101, Theorem 17.10]
	Let $\mu$ and $\nu$ be measures on a $\sigma$-algebra in a set $X$. If $\mu$ is $\sigma$-finite, the following two assertions are equivalent:
	\begin{enumerate}
		\item $\nu$ has a density with respect to $\mu$.
		\item $\nu$ is $\mu$-continuous.
	\end{enumerate} 
\end{theorem}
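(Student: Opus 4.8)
The plan is to prove the nontrivial implication $(2)\Rightarrow(1)$; the converse is immediate, since if $\nu(E)=\int_E f\,d\mu$ for every measurable $E$ and $\mu(E)=0$, then the integral over a $\mu$-null set vanishes, so $\nu(E)=0$, i.e. $\nu$ is $\mu$-continuous.

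For $(2)\Rightarrow(1)$ I would first reduce to the case where both $\mu$ and $\nu$ are finite. Writing $X=\bigsqcup_{n\ge1}X_n$ with $\mu(X_n)<\infty$ (possible by $\sigma$-finiteness) and splitting the $X_n$ further, one may assume $\mu(X)<\infty$; an additional truncation (replace $\nu$ by $\nu\wedge k\mu$ on each piece and let $k\to\infty$, handling the remainder via $\mu$-continuity) reduces to $\nu(X)<\infty$ as well. Gluing the densities obtained on the pieces then yields a density on $X$, provided one checks the glued function is measurable and that monotone convergence recovers $\nu(E)=\int_E f\,d\mu$.

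For the finite case I favour von Neumann's Hilbert-space argument. Put $\lambda=\mu+\nu$, a finite measure, and consider the linear functional $L(h)=\int_X h\,d\nu$ on $L^2(\lambda)$. By Cauchy--Schwarz and $\nu\le\lambda$ we have $|L(h)|\le\|h\|_{L^2(\nu)}\sqrt{\nu(X)}\le\|h\|_{L^2(\lambda)}\sqrt{\lambda(X)}$, so $L$ is bounded, and by the Riesz representation theorem there is $g\in L^2(\lambda)$ with $\int h\,d\nu=\int hg\,d\lambda$ for all $h\in L^2(\lambda)$. Testing against indicators of $\{g<0\}$ and $\{g>1\}$ shows $0\le g\le1$ $\lambda$-a.e. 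Rewriting the identity as $\int h(1-g)\,d\nu=\int hg\,d\mu$ and taking $h=\mathbf{1}_{\{g=1\}}$ gives $\mu(\{g=1\})=0$, whence $\mu$-continuity forces $\nu(\{g=1\})=0$. Finally set $f=g/(1-g)$ on $\{g<1\}$ and $f=0$ on $\{g=1\}$; approximating $\mathbf{1}_E(1-g)^{-1}$ by bounded functions and invoking monotone convergence in the displayed identity yields $\nu(E)=\int_E f\,d\mu$ for every measurable $E$. (An alternative is the Hahn-decomposition proof, maximizing $\int g\,d\mu$ over all $g\ge0$ with $\int_E g\,d\mu\le\nu(E)$ and using $\mu$-continuity to rule out a nontrivial singular remainder; the Hilbert-space route is cleaner here.)

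The main obstacle I anticipate is the bookkeeping in the $\sigma$-finite reduction — in particular the part of $X$ on which $\nu$ is not finite, and verifying that a density assembled from countably many pieces is genuinely measurable and still represents $\nu$. The Hilbert-space core is short; the technical care is all in passing from finite to $\sigma$-finite measures and in the limiting/monotone-convergence steps that promote the identity from $L^2$ test functions to arbitrary measurable sets.
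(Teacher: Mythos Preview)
Your sketch is a correct outline of von Neumann's Hilbert-space proof of the classical Radon--Nikodym theorem, and the reduction to the finite case together with the gluing step is handled in the standard way. There is nothing mathematically wrong with it.

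However, there is nothing to compare it against: the paper does \emph{not} prove this theorem. Theorem~\ref{LRT} is quoted verbatim from Bauer's textbook purely as historical motivation for the $p$-adic results that follow; the paper immediately attributes the proof to Nikodym (1930), Lebesgue, and Radon, and remarks that ``various proofs \ldots\ can be found in many books on measure theory,'' listing the Hahn-decomposition, Riesz-representation, and martingale approaches. Your proposal is precisely the second of these. So your write-up is fine as a self-contained proof, but it is not something the paper itself undertakes, and no comparison of approaches is possible.
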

The above theorem was proved in 1930 by O. Nikodym \cite{Niko}. H. Lebesgue proved the theorem in 1910 for the case where $\mu$ is the Lebesgue-Borel measure. J. Radon pushed things further in a fundamental work which appeared in 1913. So Theorem \ref{LRT} is called the theorem of Lebesgue-Radon-Nikodym.
The Radon-Nikodym theorem is a very important tool in the study of Measure Theory since it permits us to decompose a given measure in terms of two others that keep some properties from each other. It is a deep result which demands a previous study of several concepts to the definition of measures like the $\sigma$-algebras, exterior measures, etc.
Various proofs of the Radon-Nikodym theorem can be found in many books on measure theory, analysis, or probability theory. Usually, they use the Hahn decomposition theorem for signed measures, the Riesz representation theorem for functionals on Hilbert spaces, or the martingale theory, (see \cite{Bauer,L1,L2,L3,Rud,Roy,L4}).

The $p$-adic numbers were first introduced by K. Hensel in 1897, though with hindsight some of Kummer's earlier work can be interpreted as implicitly using $p$-adic numbers. The $p$-adic numbers were motivated primarily by an attempt to bring the ideas and techniques of power series methods into number theory. Their influence now extends far beyond this. For example, the field of $p$-adic analysis essentially provides an alternative form of calculus. 
The one important tool of these investigations is $p$-adic integral (Volkenborn integral) which is firstly described by Arnt Volkenborn \cite{Vol-1,Vol-2} in about 1972. The $p$-adic integral was used in mathematical physics, for instance, the functional equations of the Zeta functions, Stirling numbers, and Mahler theory of integration with respect to the ring $\mathbb{Z}_p$ together with Iwasawa’s $p$-adic $L$ functions {\it cf.} \cite{Kob}.

The $q$-calculus in the field of special functions was investigated in the last two decades
(see, e.g., \cite{Car,Choi,Do,DKim,Kim1,Ozd,Sim}). Many generalizations of special functions with a $q$ parameter were presented using $p$-adic $q$-integral on $\mathbb{Z}_p$ ($q$-Volkenborn
integral) introduced by Kim \cite{Kim1}. The $p$-adic $q$-integral on $\mathbb{Z}_p$ is a powerful tool to construct various generating functions of special polynomials and to obtain various explicit formulae. For more information refer to \cite{Do,DKim,Ozd,Sim} and the closely related references therein. Further, Kim \cite{Kim2} derived the Lebesgue-Radon-Nikodym theorem with respect to $p$-adic $q$-invariant distribution.

The $(\rho,q)$-integer was introduced to generalize or unify several forms of $q$-oscillator
algebras are well-known in the physics literature related to the representation theory of single parameter quantum algebras. In \cite{Ara}, Araci {\it et al.} introduced $(\rho,q)$-analogue of the Haar distribution and by means of the distribution, they constructed $(\rho,q)$-Volkenborn integral yielding to Carlitz-type $(\rho,q)$-Bernoulli numbers and polynomials. By using the integral, Duran {\it et al.} defined $(\rho,q)$-Euler numbers and polynomials in \cite{Duran}. The purpose of this paper is to derive the analogue of the Radon-Nikodym theorem with respect to $(\rho,q)$-measure on $\Bbb Z_p$.
\medskip

\section{\bf Preliminary}

In this section, we give the necessary definitions and notations.

Let $p$ be a fixed odd prime number. Throughout this paper, the
symbol $\Bbb Z$, $\Bbb Z_p$, $\Bbb Q_p$, and $\Bbb C_p$ denote the
ring of rational integers, the ring of $p$-adic integers, the field
of $p$-adic rational numbers, and the completion of algebraic
closure of $\Bbb Q_p$, respectively. Let $\nu_p $ be the normalized
exponential valuation of $\Bbb C_p$ with $|p|=
p^{-\nu_{p}(p)}=p^{-1}$. 

Suppose that $d$ is an positive number with $(d,p)=1$. Set
\begin{equation*}
	X:=X_d=\varprojlim_N\mathbb{Z}/dp^N\mathbb{Z}\ \ \text{and}\ \ X_1=\mathbb{Z}_p
\end{equation*}	
and
\begin{equation*}
	a+dp^N\mathbb{Z}_p=\{x\in X \vert x\equiv a\!\!\!\!\pmod{dp^N}  \}
\end{equation*}	
where $a\in\mathbb{Z}$ lies in $0\leq a<dp^N$.

\begin{definition}
	A function $f$ is a uniformly differentiable, denoted by $f\in UD(\mathbb{Z}_p)$, if for a given function $f:\mathbb{Z}_p\rightarrow\mathbb{C}_p$, there exists a continuous function $F_f(x,y)\rightarrow\mathbb{C}_p$ 
\begin{equation*}
	F_f(x,y)=\frac{f(x)-f(y)}{x-y},
\end{equation*}
for all $x,y\in\mathbb{Z}_p$, $x\neq y$.
\end{definition}

In this paper, we assume that  $\rho$ and $ q \in \Bbb
C_p$ with $|\rho-1| < p^{- \frac{1}{p-1}}$ and $|q-1| < p^{- \frac{1}{p-1}}$ as an indeterminate. The $q$-analogue of a fixed number $x\in \Bbb Z_p$ is that
\begin{equation*}
	[x]_q = [x:q]=\frac{1- q^x }{1-q}.
\end{equation*}
Observe that $\lim\limits_{q\rightarrow1}[x]_q=x$ for arbitrary $x$ with $|x|\leq1$.

The $(\rho,q)$-integer was introduced to generalized or unify several forms of $q$-oscillator algebras well-known in the pyhsics literature related to the representation theory of single parameter quantum algebras, in which the $(\rho,q)$-integer is given by 
\begin{equation*}
	[n]_{\rho,q} = \frac{\rho^n- q^n }{\rho-q}
\end{equation*}
which is a native generalization of the $q$-number, such that
\begin{equation*}
	\lim\limits_{\rho\rightarrow 1}[n]_{\rho,q} := [n]_q.
\end{equation*}
Note that the notation $[n]_{\rho,q}$ is symmetric. We also use the following notation for $(\rho,q)$-series:
\begin{equation*}
 \begin{aligned}
	{n \brack k}_{\rho,q}=\frac{[n]_{\rho,q}!}{[n-k]_{\rho,q}![k]_{\rho,q}!},\quad  [n]_{\rho,q}!=[n]_{\rho,q}[n-1]_{\rho,q}\ldots[2]_{\rho,q}[1]_{\rho,q}.
 \end{aligned}
\end{equation*}





\begin{definition}\cite{Kob}
	A $p$-adic distiribution $\mu$ on $X$ is $\mathbb{Q}_p$-linear vector space homomorphism from the $\mathbb{Q}_p$-vector space of locally constant functions $X$ to $\mathbb{Q}_p$. If $f:X\rightarrow\mathbb{Q}_p$ is locally constant function, instead of writing $\mu(f)$ for the value of $\mu$ at $f$, we usually write $\int f \mu$.
\end{definition}

First, we recall from \cite{Ara} the $p$-adic $(\rho,q)$-distribution on $\mathbb{Z}_p$.
\begin{definition}
	For $N\geq1$, $p$-adic $(\rho,q)$-distribution $\mu_{\rho,q}$ on $X$ is defined by
	\begin{equation*}
	\mu_{\rho,q}(a+p^N\mathbb{Z}_p)=\frac{\rho^{p^N}}{[p^N]_{\rho,q}}\biggl(\frac{q}{\rho}\biggl)^{a}.
	\end{equation*}
\end{definition}

\begin{definition}\label{def3}\cite{Ara}
	The $p$-adic $(\rho,q)$-Volkenborn integral of a function $f \in UD(\Bbb Z_p)$ is defined by
	\begin{equation*}
	I_{\rho,q} (f)=\int_{\Bbb
		Z_p} f(x) d \mu_{\rho,q} (x)= \lim_{N \rightarrow \infty}
	\frac{\rho^{p^N}}{[p^N]_{\rho,q}} \sum_{x=0}^{p^N -1} f(x) \biggl(\frac{q}{\rho}\biggl)^x.
	\end{equation*}
\end{definition}

\begin{remark}
In the limit as $\rho\rightarrow1$ in Definition \ref{def3} becomes the $q$-Volkenborn integration \cite{Kim1}.
\end{remark}

Recall from \cite{Ara} that the Caritz's-type $(\rho, q)$-Bernoulli numbers $\beta_{n:a}(\rho,q)$ may be represented by
\begin{equation*}
	\beta_{n:a}(\rho,q)=\int_{\mathbb{Z}_p}\rho^{ax}[x]^{n}_{\rho,q}d\mu_{\rho,q}(x).
\end{equation*}	
Specially, $$\beta_{0,a}(\rho,q)=\frac{k\log{\rho}}{\log{\rho q}}.$$


By the meaning of the extension of $({\rho,q})$-Volkenborn integral, we consider the below weakly (strongly) $p$-adic $({\rho,q})$-invariant distribution $\mu_{\rho,q}$ on $\mathbb{Z}_p$.
\begin{definition}
	
	If $\mu_{\rho,q}$ is satisfied the
	following equation:
	$$ \big|  [p^n]_{\rho,q}\mu_{\rho,q}( a+ p^n \Bbb Z_p)-[p^{n+1}]_{\rho,q}\mu_{\rho,q}( a+ p^{n+1} \Bbb Z_p)  \big|   \leq  \delta_{n,\rho,q},$$
	where $ \delta_{n,\rho,q} \rightarrow 0$ and $n  \rightarrow \infty$
	and $ \delta_{n,\rho,q}$ is independent of $a$, then $ \mu_{\rho,q}$ is
	called {the weakly $p$-adic $({\rho,q})$-distribution on $\Bbb Z_p$}.
	If $ \delta_{n,{\rho,q}}$ is replaced by $C
	p^{-\nu_p(\rho^{p^n}-q^{p^n})}$ ($C$ is some constant),  then $\mu_{\rho,q}$
	is called {the strongly $p$-adic $({\rho,q})$-distribution on $\Bbb Z_p$}.	
\end{definition}

\begin{definition}
	A distribution $\mu_{\rho,q}$ on $\mathbb{Z}_p$ is called $1$-admissible if
		\begin{equation*}
		|\mu_{\rho,q}( a+ p^{N} \Bbb Z_p)| \leq \left\vert\frac{\rho^{p^N}}{[p^N]_{\rho,q}}\right\vert c_n,
	\end{equation*} 
where $c_n\rightarrow 0$.
\end{definition}
\begin{remark}
	If $\mu_{\rho,q}$ is $1$-admissible on $\Bbb Z_p$, then we see that $\mu_{\rho,q}$ is a weakly $p$-adic $({\rho,q})$-invariant distribution on $\Bbb Z_p$.
\end{remark}

Let $C(\Bbb Z_p, \Bbb C_p)$ be the space of continuous function on $\Bbb Z_p$ with values in $\Bbb C_p$, provided with norm $||f||_\infty=\underset{x \in \Bbb Z_p}{\sup} |f(x)|$. The difference quotient $\bigtriangleup_1 f$ of $f$ is the function of two variables given by $$\bigtriangleup_1 f(m, x) = \frac{f(x+m)-f(x)}{m}, $$ for all $x \in \Bbb Z_p$ and $m \in \Bbb Z_p$ with $m \neq 0$.
\begin{definition}
	A function $f:\Bbb Z_p \longrightarrow \Bbb C_p$ is said to be a Lipschitz function if there exists a constant $M>0$  such that for
	all $m \in \Bbb Z_p \setminus \{0 \}$ and $x \in \Bbb Z_p$, $$|\bigtriangleup_1 f(m, x)| \le M.$$
	Here $M$ is called the Lipschitz constant of $f$. The $\Bbb C_p$-linear space consisting of all Lipschitz function is denoted by Lip$(\Bbb Z_p, \Bbb C_p)$.
\end{definition}

\begin{remark}
	$\Bbb
	C_p$-linear space is a Banach space with respect
	to the norm $||f||_1 = ||f||_\infty \vee ||\bigtriangleup_1
	f||_\infty$.
\end{remark}
\medskip


\section{\bf Radon-Nikodym's type theorem with respect to $(\rho,q)$-measure on $\Bbb Z_p$}

Let $\mu_{\rho,q}$ be a weakly $p$-adic $(\rho,q)$-invariant distribution, then $\lim_{N\rightarrow\infty}[p^{n}]_{\rho,q}\mu_{\rho,q}(x+p^N\mathbb{Z}_p)$ exists for all $x\in\mathbb{Z}_p$, and converges uniformly with respect to $x$. We define the $(\rho,q)$-analogue of Radon-Nikodym derivative of $\mu_{\rho,q}$ with respect to $p$-adic $(\rho,q)$-invariant distribution as follows:
	\begin{equation*}
f_{\mu_{ \rho,q}}(x)=\lim\limits_{N\rightarrow\infty}[p^{N}]_{\rho,q}\mu_{\rho,q}(x+p^N\mathbb{Z}_p).
\end{equation*}

\begin{theorem} 
Let $\mu_{\rho,q}$ is strongly $(\rho,q)$-invariant distribution on $\mathbb{Z}_p$. Then $f_{\mu_{ \rho,q}}\in Lip(\mathbb{Z}_p,\mathbb{C}_p)$.
\end{theorem}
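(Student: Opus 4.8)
The plan is to show directly that the difference quotient $\triangle_1 f_{\mu_{\rho,q}}(m,x)$ is uniformly bounded. Write $f(x)=f_{\mu_{\rho,q}}(x)=\lim_{N\to\infty}[p^N]_{\rho,q}\mu_{\rho,q}(x+p^N\mathbb{Z}_p)$, which exists uniformly by the paragraph preceding the statement. Fix $m\in\mathbb{Z}_p\setminus\{0\}$ and $x\in\mathbb{Z}_p$, and let $\ell=\nu_p(m)$, so that for $N>\ell$ we have $x+m\equiv x\pmod{p^\ell}$ but the two points sit in different residue classes modulo $p^N$. First I would express $f(x+m)-f(x)$ as a limit of the finite differences $[p^N]_{\rho,q}\bigl(\mu_{\rho,q}(x+m+p^N\mathbb{Z}_p)-\mu_{\rho,q}(x+p^N\mathbb{Z}_p)\bigr)$, and then insert a telescoping chain down through the intermediate scales: for each $N$, compare $\mu_{\rho,q}$ on $a+p^N\mathbb{Z}_p$ with its value on the containing ball $a+p^{N-1}\mathbb{Z}_p$, summing the nested differences from scale $\ell+1$ up to $N$.

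The key estimate is the strong $(\rho,q)$-invariance hypothesis, which bounds
\[
\bigl|[p^{n}]_{\rho,q}\mu_{\rho,q}(a+p^{n}\mathbb{Z}_p)-[p^{n+1}]_{\rho,q}\mu_{\rho,q}(a+p^{n+1}\mathbb{Z}_p)\bigr|\le C\,p^{-\nu_p(\rho^{p^n}-q^{p^n})}
\]
uniformly in $a$. Using $[p^n]_{\rho,q}=(\rho^{p^n}-q^{p^n})/(\rho-q)$ together with $|\rho-1|,|q-1|<p^{-1/(p-1)}$, one checks that $\nu_p(\rho^{p^n}-q^{p^n})$ grows like $n$ (each application of $t\mapsto t^p$ raises the valuation by $1$ under these hypotheses), so the right-hand side is bounded by $C\,p^{-n}$ up to a fixed constant. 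Hence the telescoped difference $[p^N]_{\rho,q}\mu_{\rho,q}(y+p^N\mathbb{Z}_p)-[p^{\ell+1}]_{\rho,q}\mu_{\rho,q}(y+p^{\ell+1}\mathbb{Z}_p)$ has absolute value at most $C'p^{-(\ell+1)}$ for any $y$, and letting $N\to\infty$ gives $|f(y)-[p^{\ell+1}]_{\rho,q}\mu_{\rho,q}(y+p^{\ell+1}\mathbb{Z}_p)|\le C'p^{-(\ell+1)}$. Applying this with $y=x+m$ and $y=x$, and noting that $x+m$ and $x$ lie in the \emph{same} ball modulo $p^{\ell+1}$ (since $\nu_p(m)=\ell\ge\ell$, actually we need the same class mod $p^{\ell+1}$ — so I would instead take the common scale to be $p^{\ell}$, using $x+m\equiv x\pmod{p^\ell}$), the two "anchor" terms $[p^{\ell}]_{\rho,q}\mu_{\rho,q}(\,\cdot+p^{\ell}\mathbb{Z}_p)$ coincide. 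Therefore $|f(x+m)-f(x)|\le C''p^{-\ell}=C''|m|$, which yields $|\triangle_1 f_{\mu_{\rho,q}}(m,x)|=|m|^{-1}|f(x+m)-f(x)|\le C''$, a bound independent of $m$ and $x$. This is exactly the Lipschitz condition, so $f_{\mu_{\rho,q}}\in \mathrm{Lip}(\mathbb{Z}_p,\mathbb{C}_p)$.

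The main obstacle I anticipate is bookkeeping the scales correctly in the telescoping argument: one must be careful that the "anchor" balls for $x+m$ and for $x$ genuinely coincide, which forces the anchor scale to be $p^{\nu_p(m)}$ rather than $p^{\nu_p(m)+1}$, and then to verify that the tail estimate $\sum_{n\ge \nu_p(m)} C p^{-\nu_p(\rho^{p^n}-q^{p^n})}$ is $O(p^{-\nu_p(m)})$ with an implied constant independent of $m$. The latter reduces to the elementary valuation fact that under $|\rho-1|,|q-1|<p^{-1/(p-1)}$ one has $\nu_p(\rho^{p^n}-q^{p^n})\ge n + \nu_p(\rho-q)$ (or at least $\ge n$), which I would record as a short lemma or inline computation; everything else is routine ultrametric estimation.
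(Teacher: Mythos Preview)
Your proposal is correct and follows essentially the same route as the paper: anchor at the common ball $x+p^{\nu_p(m)}\mathbb{Z}_p=(x+m)+p^{\nu_p(m)}\mathbb{Z}_p$, iterate the strong $(\rho,q)$-invariance inequality from that scale outward, and pass to the limit to obtain $|f_{\mu_{\rho,q}}(x+m)-f_{\mu_{\rho,q}}(x)|\le C\,|m|$. If anything you are more explicit than the paper, which simply asserts the multi-step bound $|[p^{n}]_{\rho,q}\mu_{\rho,q}(x+p^{n}\mathbb{Z}_p)-[p^{m}]_{\rho,q}\mu_{\rho,q}(x+p^{m}\mathbb{Z}_p)|\le Cp^{-m}$ without spelling out the telescoping or the valuation fact $\nu_p(\rho^{p^{n}}-q^{p^{n}})=n+\nu_p(\rho-q)$ that underlies it; your anticipated ``lemma'' is exactly what fills that gap.
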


\begin{proof}
Let $x,y\in\mathbb{Z}_p$ with $\vert x-y\vert_p\leq p^{-m}$. Then $x+p^m\mathbb{Z}_p=y+p^m\mathbb{Z}_p$. Hence, we note that
\begin{equation*}
\begin{aligned}
\vert f_{\mu_{\rho,q}}(x)-f_{\mu_{\rho,q}}(y) \vert = &\vert  f_{\mu_{\rho,q}}(x)-[p^{n}]_{\rho,q}\mu_{\rho,q}(x+p^n\mathbb{Z}_p)-f_{\mu_{\rho,q}}(y)+[p^{n}]_{\rho,q}\mu_{\rho,q}(y+p^n\mathbb{Z}_p)\\
&+[p^{n}]_{\rho,q}\mu_{\rho,q}(x+p^n\mathbb{Z}_p)-[p^{m}]_{\rho,q}\mu_{\rho,q}(x+p^m\mathbb{Z}_p)-[p^{n}]_{\rho,q}\mu_{\rho,q}(y+p^n\mathbb{Z}_p)\\
&+[p^{m}]_{\rho,q}\mu_{\rho,q}(y+p^m\mathbb{Z}_p)\vert.
\end{aligned}
\end{equation*}	

Since $\mu_{\rho,q}$ is strongly $(\rho,q)$-invariant distribution on $\mathbb{Z}_p$, we have
\begin{equation*}
	\begin{aligned}
	\vert  [p^{n}]_{\rho,q}\mu_{\rho,q}(x+p^n\mathbb{Z}_p)-[p^{m}]_{\rho,q}\mu_{\rho,q}(x+p^m\mathbb{Z}_p)\vert\leq Cp^{-m},
	\end{aligned}
\end{equation*}	
for some constant $C$ and $m<n$.
For $n\gg0$, we obtain
\begin{equation*}
	\begin{aligned}
		\vert f_{\mu_{\rho,q}}(x)-f_{\mu_{\rho,q}}(y) \vert \leq C_1p^{-m}=C_1 \vert x-y\vert,
	\end{aligned}
\end{equation*}	
where $C_1$ is positive real constant. Thus this completes the proof.
\end{proof}

For any positive integer $a$ and $n$ with $a < p^n$,  and $f \in UD ( \Bbb Z_p)$, let us define
\begin{equation}\label{sec:eq1}
\begin{aligned}
\Tilde{\mu}_{f,\rho,q}( a+ p^n \Bbb Z_p) =\int_{a+p^n\mathbb{Z}_p} f(x) d\mu_{\rho,q}(x),
\end{aligned}
\end{equation}	
where the integral is the $(\rho,q)$-Volkenborn integral on $\Bbb Z_p$.

\begin{theorem} \label{thm1}
	For $ f, g \in UD( \Bbb Z_p)$, we have
	\begin{enumerate}
		\item $\Tilde{\mu}_{ \alpha f + \beta g, \rho,q}= \alpha \Tilde{\mu}_{  f, \rho,q} + \beta  \Tilde{\mu}_{g,\rho,q},$ 
		\item $\vert\Tilde{\mu}_{\rho,q}( a+ p^{n} \Bbb Z_p)\vert \leq \|f\|_1\left\vert\left(\frac{q}{\rho}\right)^a \right\vert \left\vert\frac{1}{[p^n]_{\rho,q}}\right\vert.$
	\end{enumerate}

\end{theorem}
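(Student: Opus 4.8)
The plan is to treat the two assertions separately. Assertion (1) is immediate from the linearity of the $(\rho,q)$-Volkenborn integral: for each fixed $N$ the partial sum $\frac{\rho^{p^N}}{[p^N]_{\rho,q}}\sum_{x}h(x)\left(\frac{q}{\rho}\right)^{x}$, the sum running over those $x$ with $0\le x<p^N$ and $x\equiv a\pmod{p^n}$, is $\Bbb C_p$-linear in $h$; since the limit in \eqref{sec:eq1} exists for $h=f$, $h=g$ and $h=\alpha f+\beta g$ in $UD(\Bbb Z_p)$, passing to the limit gives $\Tilde{\mu}_{\alpha f+\beta g,\rho,q}(a+p^n\Bbb Z_p)=\alpha\,\Tilde{\mu}_{f,\rho,q}(a+p^n\Bbb Z_p)+\beta\,\Tilde{\mu}_{g,\rho,q}(a+p^n\Bbb Z_p)$ on every basic clopen set, hence as distributions.

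For assertion (2) the idea is to reduce the restricted integral to an ordinary $(\rho,q)$-Volkenborn integral with modified parameters. Substituting $x=a+p^ny$ (so that $0\le y<p^{N-n}$) in the defining sum, and using the factorization $[p^N]_{\rho,q}=[p^n]_{\rho,q}\,[p^{N-n}]_{\rho^{p^n},q^{p^n}}$ — which follows from $\rho^{p^N}-q^{p^N}=(\rho^{p^n}-q^{p^n})\sum_{j=0}^{p^{N-n}-1}(\rho^{p^n})^{p^{N-n}-1-j}(q^{p^n})^{j}$ — together with $\rho^{p^N}=(\rho^{p^n})^{p^{N-n}}$ and $(q/\rho)^{p^n}=q^{p^n}/\rho^{p^n}$, I would obtain
\begin{equation*}
\Tilde{\mu}_{f,\rho,q}(a+p^n\Bbb Z_p)=\frac{1}{[p^n]_{\rho,q}}\Bigl(\frac{q}{\rho}\Bigr)^{a}\,I_{\rho^{p^n},q^{p^n}}\bigl(f(a+p^n\,\cdot\,)\bigr),
\end{equation*}
where the right-hand integral is the genuine $(\rho^{p^n},q^{p^n})$-Volkenborn integral on $\Bbb Z_p$ of $g(y):=f(a+p^ny)$; note that $\rho^{p^n}$ and $q^{p^n}$ still satisfy $|\rho^{p^n}-1|<p^{-1/(p-1)}$ and $|q^{p^n}-1|<p^{-1/(p-1)}$, so Definition \ref{def3} applies.

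It then remains to bound $|I_{\rho^{p^n},q^{p^n}}(g)|$. For this I would invoke the standard estimate $|I_{\rho',q'}(g)|\le\|g\|_1$ for the $(\rho,q)$-Volkenborn integral — proved, exactly as in the classical and $q$-cases, by checking that the $M$-th partial sums form a Cauchy sequence with the right rate, the point being that $|q'|=|\rho'|=1$ and that the normalization $(\rho')^{p^M}/[p^M]_{\rho',q'}$ is $p$-adically of the same order as $p^{-M}$, both stemming from $|\rho-1|,|q-1|<p^{-1/(p-1)}$ — and then show $\|g\|_1\le\|f\|_1$. The latter holds because $\|g\|_\infty\le\|f\|_\infty$ and, for $m\ne0$, one computes $\bigtriangleup_1 g(m,y)=p^n\,\bigtriangleup_1 f(p^nm,\,a+p^ny)$, so that $\|\bigtriangleup_1 g\|_\infty\le|p^n|\,\|\bigtriangleup_1 f\|_\infty\le\|\bigtriangleup_1 f\|_\infty$ (and $\|\bigtriangleup_1 f\|_\infty<\infty$ since $f\in UD(\Bbb Z_p)$ makes $F_f$ bounded on the compact set $\Bbb Z_p\times\Bbb Z_p$). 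Combining with the displayed identity yields $|\Tilde{\mu}_{f,\rho,q}(a+p^n\Bbb Z_p)|\le\|f\|_1\,\bigl|(q/\rho)^a\bigr|\,\bigl|1/[p^n]_{\rho,q}\bigr|$, which is (2).

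The step I expect to be the real obstacle is the estimate $|I_{\rho',q'}(g)|\le\|g\|_1$. Note that the individual partial sums do \emph{not} satisfy this bound — they can be of size $p^M\|g\|_\infty$ — so one genuinely needs a telescoping argument bounding the difference of the $M$-th and $(M+1)$-st sums, and one must carry the weight $(q'/\rho')^y$ and the normalization $(\rho')^{p^M}/[p^M]_{\rho',q'}$ through it. It is precisely the hypotheses $|\rho-1|,|q-1|<p^{-1/(p-1)}$ that force these quantities to behave like the constant weight $1$ and the factor $p^{-M}$ do in the classical Volkenborn setting, so that the familiar argument (cf. the $q$-analogue in \cite{Kim1,Kim2}) applies essentially verbatim. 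Everything else in the proof is routine bookkeeping.
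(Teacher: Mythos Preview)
Your proposal is correct and follows essentially the same route as the paper: the paper's entire proof consists of the change of variables $x\mapsto a+p^nx$ together with the factorization $[p^N]_{\rho,q}=[p^n]_{\rho,q}[p^{N-n}]_{\rho^{p^n},q^{p^n}}$ to obtain the identity
\[
\Tilde{\mu}_{f,\rho,q}(a+p^n\Bbb Z_p)=\frac{1}{[p^n]_{\rho,q}}\Bigl(\frac{q}{\rho}\Bigr)^{a}\int_{\Bbb Z_p}f(a+p^nx)\,d\mu_{\rho^{p^n},q^{p^n}}(x),
\]
from which both (1) and (2) are read off. You derive the same identity and then go further than the paper by actually justifying the estimate $|I_{\rho',q'}(g)|\le\|g\|_1$ and the inequality $\|f(a+p^n\,\cdot\,)\|_1\le\|f\|_1$, steps the paper leaves implicit; so your argument is, if anything, more complete.
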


\begin{proof}

	From \eqref{sec:eq1}, these follow from the observation that
	
	\begin{equation*}
	\begin{aligned}
	\Tilde{\mu}_{f,\rho, q}( a+ p^n \Bbb Z_p) &=\lim_{m \to \infty}\dfrac{{\rho}^{p^{m}}}{[p^{m+n}]_{\rho,q}}\sum_{x=0}^{
		p^m-1}f(a+p^nx) \biggl(\frac{q}{\rho}\biggl)^{a+p^nx}\\
	&=\lim_{m \to \infty}\dfrac{{\rho}^{p^m}}{[p^{m}]_{\rho,q}}\sum_{x=0}^{
		p^{m-n}-1}f(a+p^nx) \biggl(\frac{q}{\rho}\biggl)^a\biggl(\frac{q}{\rho}\biggl)^{p^nx}\\
	&=\dfrac{1}{[p^n]_{\rho,q}}\biggl(\frac{q}{\rho}\biggl)^a\lim_{m \to \infty}\dfrac{{\rho}^{p^m-n}}{[p^{m-n}]_{\rho^{p^m},q^{p^m}}}\sum_{x=0}^{
		p^{m-n}-1}f(a+p^nx)\left(\biggl(\frac{q}{\rho}\biggl)^{p^n}\right)^x\\
	&= \dfrac{1}{[p^n]_{\rho,q}}\biggl(\frac{q}{\rho}\biggl)^a \int_{\mathbb{Z}_p}
	f(a+p^nx) d\mu_{\rho^{p^n},q^{p^n}}(x).
	\end{aligned}
	\end{equation*}

\end{proof}

\begin{cor}
For $ f \in UD( \Bbb Z_p)$, we have
\begin{equation*}
	\begin{aligned}
		\big| \Tilde{\mu}_{f, {\rho,q}}( a+ p^n \Bbb Z_p) \big|   \leq M \| f
		\|_{\infty},
	\end{aligned}
\end{equation*}
where $ \| f \|_{\infty}= \sup_{x \in \Bbb	Z_p} |f(x)|$ and $M$ is some positive constant.	
\end{cor}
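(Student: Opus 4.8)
The plan is to derive the corollary as an immediate consequence of the bound proved in Theorem~\ref{thm1}(2), so the proof should be short. First I would recall that Theorem~\ref{thm1}(2) gives
\begin{equation*}
\big|\Tilde{\mu}_{f,\rho,q}(a+p^n\mathbb{Z}_p)\big|\leq \|f\|_1\left\vert\left(\tfrac{q}{\rho}\right)^a\right\vert\left\vert\tfrac{1}{[p^n]_{\rho,q}}\right\vert,
\end{equation*}
so the task reduces to bounding each of the three factors on the right by an absolute constant times $\|f\|_\infty$. The factor $\|f\|_1=\|f\|_\infty\vee\|\bigtriangleup_1 f\|_\infty$ is already controlled in terms of $\|f\|_\infty$ together with the Lipschitz data of $f$; for $f\in UD(\mathbb{Z}_p)\subset \mathrm{Lip}(\mathbb{Z}_p,\mathbb{C}_p)$ this is a fixed multiple of $\|f\|_\infty$ once $f$ is fixed, which is all the statement asks (the constant $M$ is allowed to depend on $f$, or one restricts to $f$ with bounded Lipschitz constant).

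Next I would dispose of the two remaining factors using the standing hypotheses $|\rho-1|<p^{-1/(p-1)}$ and $|q-1|<p^{-1/(p-1)}$. These imply $|\rho|=|q|=1$, hence $\left\vert(q/\rho)^a\right\vert=1$ for every $a$, so that factor contributes nothing. For the last factor one needs a uniform lower bound $|[p^n]_{\rho,q}|\geq c>0$ independent of $n$. Writing $[p^n]_{\rho,q}=\dfrac{\rho^{p^n}-q^{p^n}}{\rho-q}$ and using that $\rho,q$ lie in the open disc of radius $p^{-1/(p-1)}$ about $1$ (the radius of convergence of the $p$-adic logarithm and exponential), one gets $\rho^{p^n}-q^{p^n}=(\rho-q)u_n$ where $u_n\to 1$ in $\mathbb{C}_p$; more precisely $|[p^n]_{\rho,q}-p^n/\,\text{(something)}|$ arguments or a direct estimate on $|\rho^{p^n}-q^{p^n}|$ via the logarithm show $|[p^n]_{\rho,q}|\to 1$, in particular it is bounded below uniformly in $n$. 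Hence $\left\vert 1/[p^n]_{\rho,q}\right\vert$ is bounded by an absolute constant. Combining the three bounds yields $\big|\Tilde{\mu}_{f,\rho,q}(a+p^n\mathbb{Z}_p)\big|\leq M\|f\|_\infty$ with $M$ a positive constant, as claimed.

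The only genuinely substantive point — and the step I expect to be the main obstacle to state cleanly — is the uniform-in-$n$ lower bound on $|[p^n]_{\rho,q}|$; everything else is bookkeeping with the ultrametric inequality. I would handle it by an explicit $p$-adic estimate: from $|\rho-1|,|q-1|<p^{-1/(p-1)}$ one has $|\log\rho|=|\rho-1|$ and $|\log q|=|q-1|$, so $|\log(\rho/q)|\leq\max(|\rho-1|,|q-1|)<p^{-1/(p-1)}$, whence $p^n\log(\rho/q)$ still lies in the domain of $\exp$ and $|(\rho/q)^{p^n}-1|=|p^n\log(\rho/q)|\to 0$; therefore $\rho^{p^n}-q^{p^n}=q^{p^n}\big((\rho/q)^{p^n}-1\big)$ has absolute value tending to $0$ in a way comparable to $|\rho-q|\cdot|p|^{\,n}$-type decay, and dividing by $|\rho-q|$ the ratio stays away from $0$; in fact $|[p^n]_{\rho,q}|=1$ for all $n$ under these hypotheses. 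I would then simply absorb any of these multiplicative constants, together with the Lipschitz constant of $f$, into the single constant $M$, completing the proof.
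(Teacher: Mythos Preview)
Your approach---invoking Theorem~\ref{thm1}(2) and then bounding each factor---is exactly what the paper intends; the paper gives no separate proof and treats the corollary as immediate. However, the key step in your plan contains a genuine error: the claim that $|[p^n]_{\rho,q}|=1$ (or even that it is bounded below uniformly in $n$) is false. Your own intermediate computation is correct up to the last line: under the hypotheses $|\rho-1|,|q-1|<p^{-1/(p-1)}$ one has
\[
|\rho^{p^n}-q^{p^n}|=|q|^{p^n}\bigl|(\rho/q)^{p^n}-1\bigr|=\bigl|p^n\log(\rho/q)\bigr|=p^{-n}\,|\rho-q|,
\]
since $\exp$ and $\log$ are isometries on the disc of radius $p^{-1/(p-1)}$ and $|\log(\rho/q)|=|\rho/q-1|=|\rho-q|$. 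Dividing by $|\rho-q|$ therefore gives $|[p^n]_{\rho,q}|=p^{-n}$, not $1$; as a sanity check, letting $\rho\to 1$ recovers the standard fact $|[p^n]_q|=|p^n|=p^{-n}$. Hence $\bigl|1/[p^n]_{\rho,q}\bigr|=p^{n}\to\infty$, and no uniform-in-$n$ lower bound exists.

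With this correction, Theorem~\ref{thm1}(2) only delivers $\bigl|\Tilde\mu_{f,\rho,q}(a+p^n\Bbb Z_p)\bigr|\le \|f\|_1\,p^{n}$. The paper is silent about the dependence of $M$, and in context the corollary should be read as the weaker assertion that for \emph{fixed} $a$ and $n$ the map $f\mapsto\Tilde\mu_{f,\rho,q}(a+p^n\Bbb Z_p)$ is bounded, with $M$ absorbing the factor $p^{n}$ (together with the passage from $\|f\|_1$ to $\|f\|_\infty$ that you already flagged). Your overall strategy matches the paper's; only the claimed uniform lower bound on $|[p^n]_{\rho,q}|$ fails and should simply be dropped rather than repaired.
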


\begin{theorem}\label{thm2} 
	Let $P(x)\in \Bbb C_p[[x]_{\rho,q}]$ be an arbitrary $({\rho,q})$-polynomial with
	$ \sum a_i [x]_{\rho,q}^i$. Then   $\mu_{P,{\rho,q}}$ is a strongly $p$-adic $({\rho,q})$-distribution on $\Bbb Z_p$ and for all $x \in \Bbb Z_p$,
	\begin{equation*}
	\begin{aligned}
	f_{ \Tilde{\mu}_{P,\rho,q}}(x) = \biggl(\frac{q}{\rho}\biggl)^{a}  P(x)\beta_{0:k}(\rho,q),
	\end{aligned}
	\end{equation*}
where $k$ is the degree of $P(x)$.
	Furthermore, for $g \in UD( \Bbb Z_p)$, we have
	\begin{equation*}
	\begin{aligned}
	\int_{\mathbb{Z}_p} g(x) d\Tilde{\mu}_{P,\rho,q}(x)=\beta_{0:k}(\rho,q)\int_{\mathbb{Z}_p}   g(x)
	P(x) d\mu_{\rho,q}(x),
	\end{aligned}
	\end{equation*}
	where the second integral is  $(\rho,q)$-Volkenborn integral.
\end{theorem}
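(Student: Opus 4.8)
The plan is to reduce every assertion to the explicit coset formula obtained inside the proof of Theorem~\ref{thm1},
\[
[p^{n}]_{\rho,q}\,\widetilde{\mu}_{P,\rho,q}(a+p^{n}\mathbb{Z}_{p})=\left(\frac{q}{\rho}\right)^{a}\int_{\mathbb{Z}_{p}}P(a+p^{n}x)\,d\mu_{\rho^{p^{n}},q^{p^{n}}}(x),
\]
combined with two elementary bracket identities: the additive law $[a+b]_{\rho,q}=\rho^{b}[a]_{\rho,q}+q^{a}[b]_{\rho,q}$ and the scaling law $[p^{n}x]_{\rho,q}=[p^{n}]_{\rho,q}\,[x]_{\rho^{p^{n}},q^{p^{n}}}$. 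Together these give $[a+p^{n}x]_{\rho,q}=\rho^{p^{n}x}[a]_{\rho,q}+q^{a}[p^{n}]_{\rho,q}[x]_{\rho^{p^{n}},q^{p^{n}}}$; raising to the $i$-th power by the binomial theorem and summing against the $a_{i}$ yields an expansion
\[
P(a+p^{n}x)=\sum_{i}a_{i}\,\rho^{p^{n}ix}[a]_{\rho,q}^{i}\;+\;[p^{n}]_{\rho,q}\,\Psi_{n}(a,x),
\]
in which $\Psi_{n}(a,x)$ is a polynomial in $[x]_{\rho^{p^{n}},q^{p^{n}}}$ whose coefficients (built from $[a]_{\rho,q},q^{a},\rho^{p^{n}x}$ and the $a_{i}$) are bounded by a constant depending only on $\rho,q,\{a_{i}\}$ and $k=\deg P$, uniformly in $a$ and $n$.

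Next I would establish the strong $(\rho,q)$-invariance. The quantitative inputs are: $|[p^{n}]_{\rho,q}|=p^{-n}$, $|\rho^{p^{n}}-1|=p^{-n-\nu_{p}(\rho-1)}$ and $|\rho^{p^{n}}-q^{p^{n}}|=p^{-n-\nu_{p}(\rho-q)}$ (the $p$-adic exponential estimates, valid since $\nu_{p}(\rho-1),\nu_{p}(\rho-q)>\frac{1}{p-1}$), so that $p^{-n}$, $|\rho^{p^{n}}-1|$ and $|[p^{n}]_{\rho,q}|$ are all \emph{fixed} constant multiples of $p^{-\nu_{p}(\rho^{p^{n}}-q^{p^{n}})}$; the uniform bound $|\rho^{p^{n}x}-1|\le p^{-n-\nu_{p}(\rho-1)}$ for $x\in\mathbb{Z}_{p}$; and the bound $|\int_{\mathbb{Z}_{p}}h\,d\mu_{\rho^{p^{n}},q^{p^{n}}}|\le\|h\|_{1}$ valid for every $n$ (this is Theorem~\ref{thm1}(2) at $n=0$, as $[1]_{\rho^{p^{n}},q^{p^{n}}}=1$), which in particular keeps the Carlitz-type quantities $\int\rho^{p^{n}jt}[t]^{\ell}_{\rho^{p^{n}},q^{p^{n}}}d\mu_{\rho^{p^{n}},q^{p^{n}}}(t)$ bounded uniformly in $n$. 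Feeding the expansion of $P(a+p^{n}x)$ into
\[
[p^{n}]_{\rho,q}\widetilde{\mu}_{P,\rho,q}(a+p^{n}\mathbb{Z}_{p})-[p^{n+1}]_{\rho,q}\widetilde{\mu}_{P,\rho,q}(a+p^{n+1}\mathbb{Z}_{p}),
\]
the $[p^{n}]_{\rho,q}\Psi_{n}$ and $[p^{n+1}]_{\rho,q}\Psi_{n+1}$ pieces are $O(p^{-n})$, while in the main layer one passes from each $\rho^{p^{n}ix}$ to $\rho^{p^{n+1}ix}$ and from the total mass $\rho^{p^{n}}$ to $\rho^{p^{n+1}}$, each step costing at most $p^{-n-\nu_{p}(\rho-1)}$; every contribution is thus $\le C\,p^{-\nu_{p}(\rho^{p^{n}}-q^{p^{n}})}$ with $C=C(\rho,q,\{a_{i}\},k)$ independent of $a$ and $n$. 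Hence $\widetilde{\mu}_{P,\rho,q}$ is a strongly $p$-adic $(\rho,q)$-distribution, so $f_{\widetilde{\mu}_{P,\rho,q}}$ exists and its defining limit converges uniformly in $x$. \textbf{This is the step I expect to be the main obstacle:} the delicacy is to get the error with the sharp right-hand side $C\,p^{-\nu_{p}(\rho^{p^{n}}-q^{p^{n}})}$ \emph{and} with $C$ uniform in $a$ and $n$, which forces one to treat the $\rho^{p^{n}ix}$-layer by estimating its fluctuation (not by the trivial bound $1$) and to control the $(\rho^{p^{n}},q^{p^{n}})$-integrals uniformly in $n$.

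For the value of $f_{\widetilde{\mu}_{P,\rho,q}}$, let $n\to\infty$ in the coset formula. If $a=a_{n}\in\{0,\dots,p^{n}-1\}$ is the representative of $x+p^{n}\mathbb{Z}_{p}$, then $a_{n}\to x$, so $(q/\rho)^{a_{n}}\to(q/\rho)^{x}$ and $[a_{n}]_{\rho,q}\to[x]_{\rho,q}$, while every term of the expansion carrying $[p^{n}]_{\rho,q}$ vanishes (since $[p^{n}]_{\rho,q}\to0$ and the coefficients of $\Psi_{n}$ stay bounded). This leaves
\[
f_{\widetilde{\mu}_{P,\rho,q}}(x)=\left(\frac{q}{\rho}\right)^{x}\left(\sum_{i}a_{i}[x]_{\rho,q}^{i}\right)\lim_{n\to\infty}\int_{\mathbb{Z}_{p}}\rho^{p^{n}it}\,d\mu_{\rho^{p^{n}},q^{p^{n}}}(t),
\]
where the last limit is independent of $i\le k$ and therefore factors out of the sum. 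The remaining integral is a finite geometric sum in the sense of Definition~\ref{def3} which one evaluates in closed form; a direct computation identifies its limit as $\beta_{0:k}(\rho,q)$, whence $f_{\widetilde{\mu}_{P,\rho,q}}(x)=(q/\rho)^{x}P(x)\,\beta_{0:k}(\rho,q)$.

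Finally, for $g\in UD(\mathbb{Z}_{p})$, the strong invariance just established guarantees that the Volkenborn-type limit $\int_{\mathbb{Z}_{p}}g\,d\widetilde{\mu}_{P,\rho,q}=\lim_{n\to\infty}\sum_{a=0}^{p^{n}-1}g(a)\,\widetilde{\mu}_{P,\rho,q}(a+p^{n}\mathbb{Z}_{p})$ exists (one may first reduce to $g$ a polynomial via its Mahler expansion). Replacing $\widetilde{\mu}_{P,\rho,q}(a+p^{n}\mathbb{Z}_{p})$ by its leading term $\frac{1}{[p^{n}]_{\rho,q}}(q/\rho)^{a}P(a)\int_{\mathbb{Z}_{p}}d\mu_{\rho^{p^{n}},q^{p^{n}}}$ — the error being controlled by the step-two estimates together with the uniform continuity of $g$ and $P$ on cosets mod $p^{n}$ — turns the sum into $\beta_{0:k}(\rho,q)$ times a Riemann sum (in the sense of Definition~\ref{def3}) for $\int_{\mathbb{Z}_{p}}g(x)P(x)\,d\mu_{\rho,q}(x)$, which is the asserted identity. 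Equivalently, since $f_{\mu_{\rho,q}}(x)=(q/\rho)^{x}$ is invertible, one obtains this from the Radon--Nikodym change-of-variables $\int g\,d\widetilde{\mu}_{P,\rho,q}=\int g\cdot\bigl(f_{\widetilde{\mu}_{P,\rho,q}}/f_{\mu_{\rho,q}}\bigr)\,d\mu_{\rho,q}$ specialised to the present situation.
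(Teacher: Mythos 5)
Your overall route is essentially the paper's: both start from the coset formula $[p^n]_{\rho,q}\Tilde{\mu}_{P,\rho,q}(a+p^n\Bbb Z_p)=(q/\rho)^a\int_{\Bbb Z_p}P(a+p^nx)\,d\mu_{\rho^{p^n},q^{p^n}}(x)$ extracted from the proof of Theorem \ref{thm1}, expand $[a+p^nx]_{\rho,q}^i$ via the addition and scaling laws for $(\rho,q)$-brackets plus the binomial theorem, recognize the resulting integrals as Carlitz-type quantities $\beta_{j:\ell}(\rho^{p^n},q^{p^n})$, obtain strong invariance from the fact that every non-leading term carries a factor $[p^n]_{\rho,q}$, and pass to the limit. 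Your bookkeeping of the quantitative estimates (identifying $p^{-n}$, $|\rho^{p^n}-1|$ and $|[p^n]_{\rho,q}|$ as fixed multiples of $p^{-\nu_p(\rho^{p^n}-q^{p^n})}$, uniformly in $a$) is in fact more explicit than what the paper records.

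There is, however, one genuinely false step. After integrating the leading layer you are left with $\left(\frac{q}{\rho}\right)^{x}\sum_i a_i[x]_{\rho,q}^i\lim_{n}\int_{\Bbb Z_p}\rho^{p^nit}\,d\mu_{\rho^{p^n},q^{p^n}}(t)$, and you assert that the inner limit ``is independent of $i\le k$ and therefore factors out of the sum.'' It is not: that integral is $\beta_{0:i}(\rho^{p^n},q^{p^n})$, its limit is $\beta_{0:i}(\rho,q)$, and by the closed form recorded in Section 2 this quantity depends (linearly) on $i$ --- it even vanishes for $i=0$, so the constant term of $P$ would be annihilated. What your computation actually yields is $f_{\Tilde{\mu}_{P,\rho,q}}(x)=(q/\rho)^x\sum_i a_i[x]_{\rho,q}^i\,\beta_{0:i}(\rho,q)$, which agrees with the asserted formula only when $P$ is the single monomial $[x]_{\rho,q}^k$. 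The paper sidesteps this by declaring ``without loss of generality $P(x)=[x]_{\rho,q}^k$'' and carrying out the whole computation for the monomial; that reduction suffers from exactly the same defect, since the constant $\beta_{0:k}$ in the conclusion depends on $k$ and the statement is therefore not linear in $P$. So the obstruction you ran into is inherited from the theorem as stated rather than introduced by you, but the ``factors out'' claim as written is wrong and cannot be repaired except by restricting to monomials or by restating the density as $\sum_i a_i[x]_{\rho,q}^i\beta_{0:i}(\rho,q)$; the same correction propagates to the final integral identity.
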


\begin{proof}
	Since $P(x)\in \Bbb C_p[[x]_{\rho,q}]$ be an arbitrary
	$({\rho,q})$-polynomial with $ \sum a_i [x]_{\rho,q}^i$, then
	$\mu_{P,{\rho,q}}$ is strongly $(\rho,q)$-measure on $\Bbb
	Z_p$. Without a loss of generality, it is enough to prove the
	statement for $P(x)=[x]_{\rho,q}^k$.

	Let $a$ be an integer with $0 \leq a < p^n$. Then we have
	\begin{equation*}\label{sec:eq5}
	\begin{aligned}
	\Tilde{\mu}_{P,\rho,q}(a +p^n \Bbb Z_p) =\dfrac{1}{[p^n]_{\rho,q}} \biggl(\frac{q}{\rho}\biggl)^{a} \lim_{m \to \infty}\dfrac{\rho^{p^{m-n}}}{[p^{m-n}]_{\rho^{p^m},q^{p^m}}}
	\sum_{i=0}^{ p^{m-n}-1}  [a+ i p^n]_{\rho,q}^k \biggl(\frac{q}{\rho}\biggl)^{p^ni}
	\end{aligned}
	\end{equation*}
	and
\begin{equation*}
	\begin{aligned}
	[] [a+ip^n]^k_{\rho,q}&=\left(q^a[ip^n]_{\rho,q}+\rho^{ip^n}[a+{\rho,q}]\right)^k\\
	&=\left(q^a[p^n]_{\rho,q}[i]_{{\rho^p}^n,{q^p}^n}+\rho^{ip^n}[a]_{\rho,q}\right)^k\\
	&=\sum_{l=0}^{k}\binom{k}{l}q^{al}[p^n]^l_{\rho,q}[i]^l_{{\rho^p}^n,{q^p}^n}\rho^{ip^n(k-l)}[a]^{k-l}_{\rho,q}.
	\end{aligned}
\end{equation*}	
This implies that	
\begin{equation*}
	\begin{aligned}
		\Tilde{\mu}_{P,\rho,q}(a+p^n\mathbb{Z}_p)&=\frac{1}{[p^n]_{\rho,q}} \biggl(\frac{q}{\rho}\biggl)^{a}\{[a]^k_{\rho,q}\beta_{0:k}({\rho^p}^n,{q^p}^n)+q^a[p^n]_{\rho,q}k\beta_{1:k-1}({\rho^p}^n,{q^p}^n)+\cdots\\
		&\quad +[p^n]^{k}_{\rho,q}q^{ak}\beta_{k:0}({\rho^p}^n,{q^p}^n)\}\\
		&= \sum_{i=0}^{\infty}\frac{P^{(i)}(a)}{i!}\beta_{i:k-i}(\rho^{p^n},q^{p^n})[p^n]^i_{\rho,q}q^{ai},
	\end{aligned}
\end{equation*}	
where $P^{(i)}(a)=\left(\frac{d}{d[x]_{\rho,q}}\right)^iP(x)\big\vert_{x=a}$.	
Thus we acquire
\begin{equation*}\label{sec:eq6}
	\begin{aligned}
[]	[p^n]_{\rho,q}\Tilde{\mu}_{P,\rho,q}(a+p^n\mathbb{Z}_p)&=\biggl(\frac{q}{\rho}\biggl)^{a}\{[a]^k_{\rho,q}\beta_{0:k}({\rho^p}^n,{q^p}^n)+[p^n]_{\rho,q}q^ak[a]^{k-1}_{\rho,q}\beta_{1:k-1}({\rho^p}^n,{q^p}^n)+\cdots\\
	&\ \ +[p^n]^{k-1}_{\rho,q}q^{ak}\beta_{k:0}({\rho^p}^n,{q^p}^n)\}.
	\end{aligned}
\end{equation*}
This can be written in the form
	\begin{equation*}\label{sec:eq7}
	\begin{aligned}
	\Tilde{\mu}_{P,\rho,q}&(a +p^n \Bbb Z_p) \\ & \equiv \dfrac{1}{[p^n]_{\rho,q}}\biggl(\frac{q}{\rho}\biggl)^{a}[a]_{\rho,q}^k\beta_{0:k}(\rho^{p^n},q^{p^n})+\biggl(\frac{q}{\rho}\biggl)^{a}q^a[a]^{k-1}_{\rho,q}\beta_{1:k-1}(\rho^{p^n},q^{p^n}) \pmod {[p^n]_{\rho,q}}\\
	& \equiv \dfrac{1}{[p^n]_{\rho,q}}\biggl(\frac{q}{\rho}\biggl)^{a} P(a)\beta_{0:k}(\rho,q)\beta_{0:k}(\rho^{p^n},q^{p^n}) +q^ak[a]^{k-1}_{\rho,q}\beta_{1:k-1}(\rho^{p^n},q^{p^n})\pmod {[p^n]_{\rho,q}}.
	\end{aligned}
	\end{equation*}
		
	Let $x$ be an arbitrary in $\Bbb Z_p$ with $x \equiv x_n
	\pmod {p^n}$ and $x \equiv x_{n+1} \pmod {p^{n+1}}$,  where $x_n$
	and $x_{n+1}$ are positive integers such that $ 0 \leq x_n < p^n$
	and $ 0 \leq x_{n+1} < p^{n+1}$. Then
	\begin{equation*}
	\begin{aligned}
		\big|&[p^{n}]_{\rho,q}  \Tilde{\mu}_{P,\rho, q}( x+ p^n \Bbb Z_p)-[p^{n+1}]_{\rho,q}\mu_{P,\rho, q}( x+ p^{n+1} \Bbb Z_p)  \big| \\
		&=\biggl|\sum_{i=0}^{\infty}\frac{P^{(i)}(x_n)}{i!}\beta_{i:k-i}(\rho^{p^{n}},q^{p^n})[p^n]^i_{\rho,q}q^{ai}-\sum_{i=0}^{\infty}\frac{P^{(i)}(x_{n+1})}{i!}\beta_{i:k-i}(\rho^{p^{n+1}},q^{p^{n+1}})[p^n]^i_{\rho^p,q^p}[p^n]^i_{\rho,q}q^{ai}   \biggl|.
	\end{aligned}
\end{equation*}
Therefore, we obtain
	\begin{equation*}
	\begin{aligned}
	\big|[p^{n}]_{\rho,q} \Tilde{\mu}_{P,\rho, q}( a+ p^n \Bbb Z_p)-[p^{n+1}]_{\rho,q}\Tilde{\mu}_{P,\rho, q}( a+ p^{n+1} \Bbb Z_p)  \big|   \leq  C
	p^{-\nu_p(\rho^{p^n}-q^{p^n})},
	\end{aligned}
	\end{equation*}
	where $C$ is a positive some constant and $ n \gg 0$.

	Let 
	\begin{equation*}\label{sec:eq8}
	\begin{aligned}
	f_{ \Tilde{\mu}_{P, \rho, q}}(a)= \lim_{n \rightarrow \infty}
	[p^n]_{\rho,q}\Tilde{\mu}_{P, \rho, q}( a+ p^n \Bbb Z_p).
	\end{aligned}
	\end{equation*}
	Then, we have
	\begin{equation*}
	\begin{aligned}
	f_{ \Tilde{\mu}_{P,\rho,q}}(a)  =\biggl(\frac{q}{\rho}\biggl)^{a} P(a)\beta_{0:k}(\rho,q).
	\end{aligned}
	\end{equation*}
	Furthermore, since $f_{ \Tilde{\mu}_{P,\rho,q}}(x)$ is continuous on $\Bbb
	Z_p$, it follows for all $x \in \Bbb Z_p$
	\begin{equation}\label{sec:eq10}
	\begin{aligned}
	f_{ \Tilde{\mu}_{P,\rho,q}}(x) = \biggl(\frac{q}{\rho}\biggl)^{x} P(x)\beta_{0:k}(\rho,q).
	\end{aligned}
	\end{equation}
	Suppose that $g \in UD( \Bbb Z_p)$. From \eqref{sec:eq10}, we obtain
	\begin{equation*}
	\begin{aligned}
	\int_{\mathbb{Z}_p} g(x) d\Tilde{\mu}_{P,\rho,q}(x)& = \lim_{n \to \infty}
	\sum_{i=0}^{ p^{n}-1} g(i)  \Tilde{\mu}_{P,\rho,q}(i+  p^n\Bbb Z_p)\frac{{\rho^p}^n}{[p^n]_{\rho,q}} \\
	& =\beta_{0:k}(\rho,q)\lim_{n\rightarrow \infty} \sum_{i=0}^{ p^{n}-1} g(i)  \biggl(\frac{q}{\rho}\biggl)^{i}P(i)\frac{(\rho^p)^n}{[p^n]_{\rho,q}}\\
	&=  \int_{\mathbb{Z}_p} g(x) P(x) d\mu_{\rho,q}(x).
	\end{aligned}
	\end{equation*}
	The proof of Theorem \ref{thm2} is complete.
\end{proof}
\bigskip

\begin{theorem}\label{thm3} 
	Let  $\mu_{\rho,q}$ be  a strongly  $({\rho,q})$-measure on
	$\Bbb Z_p$, and assume that the  $({\rho,q})$- Radon-Nikodym
	derivative $f_{ \mu_{\rho,q}}$ on  $\Bbb Z_p$ is a continuous function
	on  $\Bbb Z_p$. Suppose that $\mu_{1, \rho,q}$ is the strongly
	$({\rho,q})$-measure associated to $f_{ \mu_{\rho,q}}$, then
	there exists  a $({\rho,q})$-measure $\mu_{2, \rho,q}$ on  $\Bbb Z_p$ such that
	\begin{equation*}
	\begin{aligned}
	\mu_{\rho,q} = \mu_{1, \rho,q} + \mu_{2, \rho,q}.
	\end{aligned}
	\end{equation*}
\end{theorem}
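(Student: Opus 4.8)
The plan is to define $\mu_{2,\rho,q}$ simply as the difference $\mu_{\rho,q} - \mu_{1,\rho,q}$ and to check that this difference is again a $(\rho,q)$-measure (i.e.\ a $1$-admissible, or at least weakly $(\rho,q)$-invariant, distribution on $\Bbb Z_p$). Since the class of distributions is a $\Bbb C_p$-vector space under pointwise operations on the sets $a+p^n\Bbb Z_p$, additivity is automatic; the content is the growth estimate. So the real task is: given that $\mu_{\rho,q}$ is a strongly $(\rho,q)$-measure with continuous Radon--Nikodym derivative $f_{\mu_{\rho,q}}$, and that $\mu_{1,\rho,q}$ is the strongly $(\rho,q)$-measure associated to $f_{\mu_{\rho,q}}$, show that $\mu_{\rho,q}-\mu_{1,\rho,q}$ satisfies the admissibility bound.

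First I would make explicit what ``the strongly $(\rho,q)$-measure associated to $f_{\mu_{\rho,q}}$'' means: following \eqref{sec:eq1} and Theorem~\ref{thm1}, it is the distribution $\Tilde{\mu}_{f_{\mu_{\rho,q}},\rho,q}$ given by
\begin{equation*}
\mu_{1,\rho,q}(a+p^n\Bbb Z_p)=\int_{a+p^n\Bbb Z_p}f_{\mu_{\rho,q}}(x)\,d\mu_{\rho,q}(x)
=\frac{1}{[p^n]_{\rho,q}}\Bigl(\frac{q}{\rho}\Bigr)^{a}\int_{\Bbb Z_p}f_{\mu_{\rho,q}}(a+p^nx)\,d\mu_{\rho^{p^n},q^{p^n}}(x),
\end{equation*}
which by Theorem~\ref{thm1}(2) obeys $|\mu_{1,\rho,q}(a+p^n\Bbb Z_p)|\le \|f_{\mu_{\rho,q}}\|_\infty\,|(q/\rho)^a|\,|[p^n]_{\rho,q}|^{-1}$, so $\mu_{1,\rho,q}$ is at least $1$-admissible. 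Next I would compute $[p^n]_{\rho,q}\mu_{1,\rho,q}(a+p^n\Bbb Z_p)$ and show it tends to $f_{\mu_{\rho,q}}(a)$ as $n\to\infty$: this uses continuity of $f_{\mu_{\rho,q}}$ together with the fact that $\mu_{\rho^{p^n},q^{p^n}}$ is a probability-type distribution (its total mass scaling is controlled), so that $f_{\mu_{1,\rho,q}}=f_{\mu_{\rho,q}}$; that is, $\mu_{1,\rho,q}$ has the \emph{same} Radon--Nikodym derivative as $\mu_{\rho,q}$. Hence for $\mu_{2,\rho,q}:=\mu_{\rho,q}-\mu_{1,\rho,q}$ we get
\begin{equation*}
[p^n]_{\rho,q}\,\mu_{2,\rho,q}(a+p^n\Bbb Z_p)=[p^n]_{\rho,q}\mu_{\rho,q}(a+p^n\Bbb Z_p)-[p^n]_{\rho,q}\mu_{1,\rho,q}(a+p^n\Bbb Z_p)\longrightarrow f_{\mu_{\rho,q}}(a)-f_{\mu_{\rho,q}}(a)=0
\end{equation*}
uniformly in $a$.

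From there I would conclude by estimating the telescoping difference
\begin{equation*}
\bigl|[p^n]_{\rho,q}\mu_{2,\rho,q}(a+p^n\Bbb Z_p)-[p^{n+1}]_{\rho,q}\mu_{2,\rho,q}(a+p^{n+1}\Bbb Z_p)\bigr|,
\end{equation*}
splitting it through the common limit $f_{\mu_{\rho,q}}(a)$ (adding and subtracting it, exactly as in the proof that $f_{\mu_{\rho,q}}\in\mathrm{Lip}$ above): each of the two resulting terms is the tail of a uniformly convergent sequence, and for the strongly-invariant refinement one controls them by $Cp^{-\nu_p(\rho^{p^n}-q^{p^n})}$ using the strong invariance of $\mu_{\rho,q}$ and the explicit form of $\mu_{1,\rho,q}$ from Theorem~\ref{thm1}. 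This shows $\mu_{2,\rho,q}$ is a (strongly) $(\rho,q)$-measure, and $\mu_{\rho,q}=\mu_{1,\rho,q}+\mu_{2,\rho,q}$ by construction.

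The main obstacle I anticipate is the identity $f_{\mu_{1,\rho,q}}=f_{\mu_{\rho,q}}$: one must show that integrating $f_{\mu_{\rho,q}}$ against $d\mu_{\rho,q}$ over the shrinking ball $a+p^n\Bbb Z_p$, after renormalizing by $[p^n]_{\rho,q}$, recovers the pointwise value $f_{\mu_{\rho,q}}(a)$. This is where continuity of $f_{\mu_{\rho,q}}$ is essential (it lets us replace $f_{\mu_{\rho,q}}(a+p^nx)$ by $f_{\mu_{\rho,q}}(a)$ up to a uniformly small error), and one also needs the normalization $\int_{\Bbb Z_p}d\mu_{\rho^{p^n},q^{p^n}}(x)$ to behave correctly — effectively $[p^n]_{\rho,q}\cdot\frac{1}{[p^n]_{\rho,q}}\cdot(\text{something}\to 1)$ — which should follow from $\beta_{0:k}(\rho,q)$-type evaluations and the hypothesis $|q-1|,|\rho-1|<p^{-1/(p-1)}$ guaranteeing the relevant quantities are $p$-adic units. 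Everything else is routine $p$-adic estimation of the kind already carried out for the Lipschitz theorem.
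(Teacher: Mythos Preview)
Your overall architecture---set $\mu_{2,\rho,q}:=\mu_{\rho,q}-\mu_{1,\rho,q}$ and verify the required growth bound---is exactly what the paper does in its final paragraph. The divergence is in the technical core: how to control $[p^n]_{\rho,q}\,\mu_{1,\rho,q}(a+p^n\Bbb Z_p)$ and compare it with $f_{\mu_{\rho,q}}(a)$.

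The paper does \emph{not} use your direct continuity argument (replace $f_{\mu_{\rho,q}}(a+p^nx)$ by $f_{\mu_{\rho,q}}(a)$ and hope the residual integral normalizes to $1$). Instead it writes the continuous function $f=f_{\mu_{\rho,q}}$ in its $(\rho,q)$-Mahler expansion $f(x)=\sum_{n\ge 0}a_{n,\rho,q}{x\brack n}_{\rho,q}$, truncates to a $(\rho,q)$-polynomial $f_m$, and invokes Theorem~\ref{thm2} on $f_m$. Splitting $f=f_m+(f-f_m)$ and using $\|f-f_m\|_\infty\to 0$ together with the polynomial computation of Theorem~\ref{thm2}, the paper obtains
\[
[p^n]_{\rho,q}\,\Tilde{\mu}_{f,\rho,q}(a+p^n\Bbb Z_p)\equiv \Bigl(\tfrac{\rho}{q}\Bigr)^{a}\beta_{0:m}(\rho,q)\,f_m(a)\pmod{[p^n]_{\rho,q}},
\]
and from this the estimates $\bigl|[p^n]_{\rho,q}\mu_{1,\rho,q}(a+p^n\Bbb Z_p)-f_{\mu_{\rho,q}}(a)\bigr|\le C_4\,p^{-\nu_p(\rho^{p^n}-q^{p^n})}$ and finally $\bigl|[p^n]_{\rho,q}\mu_{\rho,q}-[p^n]_{\rho,q}\mu_{1,\rho,q}\bigr|\le K$. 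So the Mahler expansion plus Theorem~\ref{thm2} is the key lemma you are missing.

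The obstacle you flagged is genuine, and it is precisely what the Mahler-expansion detour circumvents. Your hoped-for normalization ``$\int_{\Bbb Z_p}d\mu_{\rho^{p^n},q^{p^n}}\to 1$'' is not what happens: already in the polynomial case Theorem~\ref{thm2} gives $f_{\Tilde\mu_{P,\rho,q}}(x)=(q/\rho)^x\beta_{0:k}(\rho,q)\,P(x)$, not $P(x)$, so your asserted identity $f_{\mu_{1,\rho,q}}=f_{\mu_{\rho,q}}$ is off by the factor $(q/\rho)^x\beta_{0:k}(\rho,q)$. That does not wreck the final conclusion---boundedness of $[p^n]_{\rho,q}\mu_{2,\rho,q}(a+p^n\Bbb Z_p)$ is all that is needed to call $\mu_{2,\rho,q}$ a $(\rho,q)$-measure---but it shows that the direct continuity route misidentifies the limit and leaves the actual estimate unproved. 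To close the argument you should import the Mahler expansion and reduce to Theorem~\ref{thm2}, as the paper does.
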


\begin{proof}
	
	Let $f(x)= \sum_{n=0}^\infty a_{n,\rho,q} {x \brack n}_{\rho,q}$ be
	the $(\rho,q)$-Mahler expansion of continuous function $f$ on $\Bbb Z_p$, then we note that $\lim_{n \to \infty} | a_{n, \rho,q}|=0$. 
	
	Suppose that 
	\begin{equation*}
	\begin{aligned}
	f_m(x)= \sum_{i=0}^m a_{i,\rho,q}  {x \brack i}_{\rho,q}  \in
	\Bbb C_p[[x]_{\rho,q}].
	\end{aligned}
	\end{equation*}
	Then it is easily checked the following inequality
	\begin{equation*}
	\begin{aligned}
	\|  f-f_m \|_{\infty} \leq \sup_{m \leq n} | a_{n,\rho,q} |.
	\end{aligned}
	\end{equation*}
	Writing $f= f_m +f-f_m$, it follows that
	\begin{equation*}
	\begin{aligned}
	& \big| [p^{n}]_{\rho,q} \mu_{f,\rho,q}( a+ p^n \Bbb Z_p)-[p^{n+1}]_{\rho,q}\mu_{f,\rho,q}( a+ p^{n+1} \Bbb Z_p)  \big|  \\
	&  \leq \max \{ \big| [p^{n}]_{\rho,q} \mu_{f_m,\rho,q}( a+ p^n \Bbb
	Z_p)-[p^{n+1}]_{\rho,q}\mu_{f_m,\rho,q}( a+ p^{n+1} \Bbb Z_p)  \big|, \\
	& \quad \quad  \qquad   \big|[p^{n}]_{\rho,q} \mu_{f-f_m,\rho,q}( a+ p^n \Bbb
	Z_p)-[p^{n+1}]_{\rho,q}\mu_{f-f_m,\rho,q}( a+ p^{n+1} \Bbb Z_p) \big| \}.
	\end{aligned}
	\end{equation*}
	We conclude by Theorem \ref{thm2} that
	\begin{equation}\label{sec:eq12}
	\begin{aligned}
	\big| [p^{n}]_{\rho,q}\mu_{f-f_m,\rho,q}( a+ p^n \Bbb Z_p) \big|   \leq  \| f
	-f_m \|_{\infty} \leq C_1 p^{-\nu_p(\rho^{p^n}-q^{p^n})} ,
	\end{aligned}
	\end{equation}
	where $C_1$ is some positive constant.	
	For  $ m \gg 0$, we have $  \| f \|_{\infty} = \|f_m
	\|_{\infty} .$	Accordingly,
	\begin{equation}\label{sec:eq13}
	\begin{aligned}
	\big| [p^{n}]_{\rho,q} \mu_{f_m,\rho,q}( a+ p^n \Bbb Z_p)-[p^{n+1}]_{\rho,q}\mu_{f_m,\rho,q}( a+ p^{n+1} \Bbb Z_p)  \big|  \leq C_2 p^{-\nu_p(\rho^{p^n}-q^{p^n})} ,
	\end{aligned}
	\end{equation}
	where $C_2$ is also  some positive constant. This implies, by virtue of \eqref{sec:eq12} and \eqref{sec:eq13},
	\begin{equation*}
	\begin{aligned}
	& \big|  f(a)- [p^{n}]_{\rho,q}\mu_{f,\rho,q}( a+ p^n \Bbb Z_p)  \big| \\
	&  \leq \max \{ \big| f(a)- f_m(a) \big|, \big| f_m(a)-[p^{n}]_{\rho,q} \mu_{f_m,\rho,q}( a+ p^n \Bbb Z_p)\big|, \big|[p^{n}]_{\rho,q}\mu_{f-f_m,\rho,q}( a+ p^{n} \Bbb Z_p)  \big| \} \\
	&  \leq \max \{ \big| f(a)- f_m(a) \big|, \big| f_m(a)- [p^{n}]_{\rho,q}\mu_{f_m,
		\rho,q}( a+ p^n \Bbb Z_p)\big|,   \| f
	-f_m \|_{\infty} \}.
	\end{aligned}
	\end{equation*}
		If we fix $ \epsilon >0$ and fix $m$ such that $ \| f
	-f_m \| \leq \epsilon$, then for $ n \gg 0$, we have
	\begin{equation*}
	\begin{aligned}
	\big|  f(a)- [p^{n}]_{\rho,q}\mu_{f,\rho,q}( a+ p^n \Bbb Z_p)  \big|  \leq  \epsilon.
	\end{aligned}
	\end{equation*}
	Hence, we acquire
	\begin{equation*}
	\begin{aligned}
	f_{ \Tilde{\mu}_{f,\rho,q}}(a)= \lim_{n \rightarrow \infty} [p^{n}]_{\rho,q} \Tilde{\mu}_{f, \rho,q}( a+ p^n \Bbb Z_p)=\beta_{0:m}(\rho,q)\biggl(\frac{\rho}{q}\biggl)^{a} f(a),
	\end{aligned}
	\end{equation*}
	where $m$ is the degree of $f_m(x)$. Let $m$ be the sufficiently large number such that $\| f -f_m \|_{\infty} \leq p^{-n}.$ Then we have
	\begin{equation*}
	\begin{aligned}
[]	[p^{n}]_{\rho,q}\Tilde{\mu}_{f,\rho,q}( a+ p^n \Bbb Z_p) & = [p^{n}]_{\rho,q} \mu_{f_m,\rho,q}( a+ p^n \Bbb Z_p)+ [p^{n}]_{\rho,q}\Tilde{\mu}_{f-f_m,\rho,q}( a+ p^{n} \Bbb Z_p)\\
	& = [p^{n}]_{\rho,q}\mu_{f_m,\rho,q}( a+ p^n \Bbb Z_p) \\
	&= \biggl(\frac{\rho}{q}\biggl)^{a}  f_m(a)\beta_{0:m}(\rho,q) \pmod {[p^n]_{\rho,q}}.
	\end{aligned}
	\end{equation*}
	For $g \in UD( \Bbb Z_p)$, we obtain
	\begin{equation*}
	\begin{aligned}
	\int_{\mathbb{Z}_p} g(x) d\Tilde{\mu}_{f,\rho,q}(x)=\int_{\mathbb{Z}_p} \beta_{0:m}(\rho,q) f(x)
	g(x) d\mu_{\rho,q}(x).
	\end{aligned}
	\end{equation*}
	Assume that $f$ is the function from $ UD( \Bbb Z_p, \Bbb
	C_p)$ to $ Lip( \Bbb Z_p, \Bbb C_p)$. By the definition of
	$\mu_{\rho,q}$, we easily see that $\mu_{\rho,q}$ is a strongly $p$-adic $(\rho,q)$-distribution on $\Bbb Z_p$ and for $n \gg 0$
	\begin{equation*}
	\begin{aligned}
	\big|  f_{\mu_{\rho,q}}( a )-[p^{n}]_{\rho,q}\mu_{\rho,q}( a+ p^{n} \Bbb Z_p)  \big|  \leq C_3 p^{-\nu_p(\rho^{p^n}-q^{p^n})} ,
	\end{aligned}
	\end{equation*}
	where $C_3$ is some positive constant.
		If  $\mu_{1, \rho,q}$ is associated strongly  $(\rho,q)$-measure on $\Bbb Z_p$, then we have
	\begin{equation*}
	\begin{aligned}
	\big| [p^{n}]_{\rho,q} \mu_{1,\rho,q}( a+ p^{n} \Bbb Z_p)-f_{\mu_{\rho,q}}( a ) \big|  \leq C_4 p^{-\nu_p(\rho^{p^n}-q^{p^n})} ,
	\end{aligned}
	\end{equation*}
	where   $n \gg 0$ and $C_4$ is some positive constant.	
	Consequently, it follows that
	\begin{equation*}
	\begin{aligned}
	& \big| [p^{n}]_{\rho,q} \mu_{\rho,q}( a+ p^n \Bbb Z_p)-[p^{n}]_{\rho,q}\mu_{1,\rho,q}( a+ p^{n} \Bbb Z_p)  \big|  \\
	&  \leq  \big| [p^{n}]_{\rho,q} \mu_{\rho,q}( a+ p^n \Bbb Z_p)- f_{\mu_{\rho,q}}(a) \big|
	+   \big|  f_{\mu_{\rho,q}}(a) - [p^{n}]_{\rho,q} \mu_{1,\rho,q}( a+ p^n
	\Bbb Z_p) \big| \leq K,
	\end{aligned}
	\end{equation*}
	where  $K$ is some positive constant. 	
	Therefore, $\mu_{\rho,q}-\mu_{1,\rho,q}$ is a $p$-adic $(\rho,q)$-distribution on $\Bbb Z_p$. The proof of Theorem \ref{thm3} is complete.
\end{proof}

\subsection*{Acknowledgements}
The work of D. Lim was partially supported by the National Research Foundation of Korea (NRF) grant funded by the Korean government (MSIT) NRF-2021R1C1C1010902.

\end{document}